\numberwithin{equation}{section}
\newtheorem{Thm}{Theorem}[section]
\newtheorem*{Thm*}{Theorem}
\newtheorem{Prop}[Thm]{Proposition}
\newtheorem{Lem}[Thm]{Lemma}
\newtheorem{Cor}[Thm]{Corollary}
\newtheorem{Fact}[Thm]{Fact}
\newtheorem{Exa}[Thm]{Example}
\theoremstyle{remark}
\newtheorem{Rem}[Thm]{Remark}
\theoremstyle{definition}
\newtheorem{Def}[Thm]{Definition}
\newtheorem*{Def*}{Definition}
\numberwithin{equation}{section}
\newcommand{\g}[1]{{\mathfrak #1}}
\newcommand{\m}[1]{\mathbb{ #1}}
\newcommand{\mc}[1]{\mathcal{ #1}}
   \def\wt{\widetilde}
\def\al{\alpha}               
\def\de{\delta}       \def\eps{\varepsilon}  
       \def\la{\lambda}      
\def\si{\sigma}                
\def\ph{\varphi}               
\def\La{\Lambda}
\theoremstyle{definition}
\theoremstyle{remark}
\newtheorem{Rmq}[Thm]{Remark}
\numberwithin{equation}{section}
\newfont{\goth}{eufm10 at 12pt}
\newfont{\gots}{eufm8 at 9pt}
\def\bt{\begin{Thm}}
\def\et{\end{Thm}}
\def\br{\begin{Rmq}}
\def\er{\end{Rmq}}
\def\bc{\begin{Cor}}
\def\ec{\end{Cor}}
\def\bp{\begin{Prop}}
\def\ep{\end{Prop}}
\def\bl{\begin{Lem}}
\def\el{\end{Lem}}
\def\bd{\begin{Def}}
\def\ed{\end{Def}}
\def\bq{\begin{quotation}}
\def\eq{\end{quotation}}
\def\bfa{\begin{Fact}}
\def\efa{\end{Fact}}
\def\bexa{\begin{Exa}}
\def\eexa{\end{Exa}}
\def\ra{\rightarrow}
\def\vs{\vspace{1em}}
\begin{document}
%\mbox{ }\vspace*{-5em}
\title{
Positive harmonic functions\\
on the Heisenberg group I
}
\author{Yves Benoist
}
\date{}

\maketitle
%\vspace{-3em}
%\centerline{\footnotesize 
%\mbox{Preliminary version } }

\begin{abstract}
\noindent 
We present the classification of positive harmonic functions 
on the Heisenberg group in the case of the southwest measure. 
%We prove that they either are characters 
%or are induced from a positive harmonic 
%character of an abelian subgroup.
\end{abstract}

{\footnotesize \tableofcontents}
\nopagebreak
%\newpage
\section{Introduction}

%11
%\subsection{Sixtieth Birthday Theorem}

In this self-contained paper, we present the classification 
of the positive harmonic functions
on the Heisenberg group $H_3(\m Z)$  in the 
special case of the {\it southwest measure}.
%i.e. solutions of Equations \eqref{eqnhxyzha}.
This example is striking because the famous  partition
functions occur as  positive harmonic functions. 
In this case, our main result tells us that roughly 
all positive harmonic functions are 
combinations of characters and partition functions
(Theorem \ref{thharmu0}).

We will also explain with no proof how this result
can be extended to  finite positive measures on $H_3(\m Z)$
(Theorem \ref{thharmu1}).

%In this paper we will present this classification 
%in a more precise and rigorous way. 
%We begin this introduction with a slightly informal style 
%which emphasizes the concreteness of the question 
%and the astonishment of the answer.

%11
\subsection{The partition function $p(x,y,z)$ as a potential}
\bq
We first introduce the ``partition function'' $p(x,y,z)$ 
for any integers $x$, $y$, $z$ in $\m Z$. 
\eq

\subsubsection{The partition function}
\begin{figure}[ht]
\centerline{\includegraphics[width=5cm]{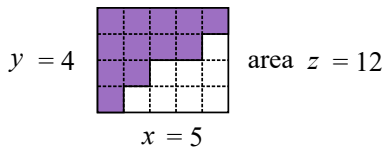}}
\caption{ {\small The partition $12\!=\!5\!+\!4\!+\!2\!+\!1$ 
is included in a $5\!\times\! 4$ rectangle.}} 
\label{figparhar}
\end{figure}
This function counts the ``number of Young diagrams of area $z$'', also called ``partitions of $z$'', 
included in a rectangle with side lengths
$x$ and $y$.
More precisely, when  $x$, $y$ and $z$ are non-negative, one has 
\begin{eqnarray}
\label{eqnpar}
p(x,y,z)&=&|\{ (n_1,\ldots , n_y)\in \m Z^y
\mid x\geq n_1\geq\cdots\geq n_y\geq 0\\
\nonumber 
&&\hspace{7em}\;\;{\rm and}\; \; n_1+\cdots +n_y=z\}|\, ,
\end{eqnarray}
and 
$p(x,y,z)=0$ otherwise. The integers $n_i$ are the lengths of the rows of the partition.
By convention, for  $x\geq 0$, one has $p(x,0,z)=0$ when $z\neq 0$,
and $p(x,0,0)=1$. 
\begin{figure}[ht]
\centerline{\includegraphics[width=12.5cm]{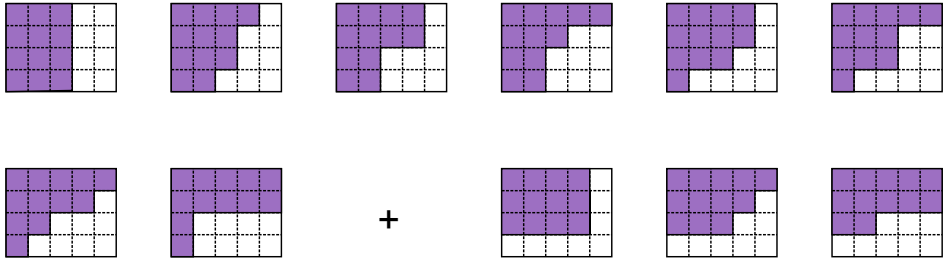}}
\caption{ {\small The $11$ partitions in  the equality
$p(5,4,12)=p(4,4,8)+p(5,3,12)$.}} 
\label{figparsum}
\end{figure}
This partition function satisfies the functional equation,
for all $g=(x,y,z)$ in $\m Z^3$, $g\neq (0,0,0)$, 
\begin{equation}
 \label{eqnpxyzha}
 p(x,y,z)=  p(x\!-\!1,y,z\!-\!y)+ p(x,y\!-\!1,z).
\end{equation}
One checks it by splitting this set of partitions according to 
the colour of the lower-left case of the rectangle as in Figure \ref{figparsum}. 

\subsubsection{The Heisenberg group}
Recall that  the Heisenberg group $G:=H_3(\m Z)$ is the set $\m Z^3$
of triples  seen as matrices 
%\begin{equation*}
$(x,y,z):=\mbox{\scriptsize 
	$\left(\!
	\begin{array}{ccc} 1 &x&z   \\
	0 &1&y\\
	0&0&1  
	\end{array}\!
	\right)$}.
$
%\end{equation*}
It is endowed with the product  
\begin{equation}
\label{eqnprohei}
(x_0,y_0,z_0)\, (x,y,z)=  (x_0+x,y_0+y,z_0+z+x_0y)\, .
\end{equation}
Let $\mu_0$ be the southwest measure on $G$. It is given by 
\begin{equation}
\label{eqnmu0}
\mu_0=\de_{a^{-1}}+\de_{b^{-1}}
\;\;\mbox{\rm 
where $a:=(1,0,0)$ and $b=(0,1,0)$.}
\end{equation}
Let $e:=(0,0,0)$ be the unity of $G$ and ${\bf 1}_{\{e\}}$ be the characteristic function of 
$\{e\}$. Equation \eqref{eqnpxyzha} can be rewritten as, for all $g=(x,y,z)$ in $G$, 
\begin{equation}
\label{eqnpgsuphar}
p(g)=  p(a^{-1}g)+ p(b^{-1}g) +{\bf 1}_{\{e\}}(g).
\end{equation}
In particular, the function $f=p$ satisfies
\begin{equation}
\label{eqnfgsuphar}
f(g)\geq P_{\mu_0}f(g) 
\;\; \mbox{\rm where}\;\;
 P_{\mu_0}f(g) :=  f(a^{-1}g)+ f(b^{-1}g).
\end{equation}

This inequality \eqref{eqnfgsuphar} tells us that the 
function 
$f$ is a  $\mu_0$-superharmonic function on the Heisenberg group
$G$. 

\subsubsection{The potential}
More precisely,  {\it the partition function $p(g)$
is the  potential of 	$\mu_0$ at $e$}.
This means that one has the equality 
$$
p =\sum_{n\geq 0}P_{\mu_0}^n{\bf 1}_{\{e\}}
$$
%where 
%$$\al:=a^{-1}=(1,0,0)\;\;{\rm and}\;\; 
%\be:=b^{-1}=(0,1,0).$$
\begin{figure}[ht]
\centerline{\includegraphics[width=2.5cm]{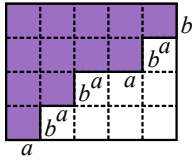}}	\caption{ {\small The partition $12\!=\!5\!+\!4\!+\!2\!+\!1$ associated to the word $w=ababaabab$\\ 
\hspace*{5em}gives  the element $g=g_w=ababaabab=(5,4,12)\in H_3(\m Z)$.}} 
\label{figparwor}
\end{figure}
Indeed, as can be seen in Figure \ref{figparwor}, for $g$ in $G$, 
\begin{equation}
\label{eqnpgnwab}
\mbox{\it p(g) is  the number of ways 
	to write $g$  as a word in
	$a$ and $b$.}
\end{equation}
A function $h$ on $G$ is said to be $\mu_0$-harmonic if it satisfies
\begin{equation}
\label{eqnhghar}
h(g)= P_{\mu_0}h(g) 
, \;\mbox{\rm for all $g$ in $G$, or equivalently}\;\;
\end{equation}
\begin{equation}
\label{eqnhxyzha}
h(x,y,z)=  h(x\!-\!1,y,z\!-\!y)+ h(x,y\!-\!1,z)
,\;\; \mbox{\rm for all $(x,y,z)$ in $\m Z^3$}.
\end{equation}

%12
\subsection{Construction of positive $\mu_0$-harmonic functions}
\bq
We want to classify all the positive\footnote{A function $f$ on $G$ is said to be positive if $f(g)\geq 0$ for all $g$ in $G$ and $f\not= 0$.} solutions of \eqref{eqnfgsuphar}, 
i.e. all the positive $\mu_0$-superharmonic functions $h$ on $G$.
We begin with five remarks.
\eq

\subsubsection{Choquet Theorem} By a  theorem of Choquet in \cite{Choquet61}, 
every positive superharmonic function $h$ is an average
of extremal\footnote{A positive (super)harmonic function is said to be extremal if it cannot be written as the sum of two non-proportional 
positive (super)harmonic functions.}  positive superharmonic functions
$h_\al$. Moreover when $h$ is harmonic the $h_\al$ are harmonic. 
By Riesz decomposition theorem \cite[Thm 2.1.4]{Revuz84}, every positive $\mu_0$-superharmonic function
can be written in a unique way as the sum of a 
potential\footnote{A potential is a function of the form
$f=\sum_{n\geq 0}P_{\mu_{_0}}^nF$ for a positive function $F$ on $G$.}
and a positive $\mu_0$-harmonic function.
Therefore it is enough to describe 
the extremal positive $\mu_0$-harmonic functions on $G$.

\subsubsection{ Choquet-Deny Theorem} If we look for a 
$\mu_0$-harmonic function $h$ which does not depend on 
$z$, Equation \eqref{eqnhxyzha} becomes 
\begin{equation}
\label{eqnhxyhar}
h(x,y)= h(x\!-\!1,y)+ h(x,y\!-\!1)
\; ,\; \;\mbox{\rm for all $(x,y)$ in $\m Z^2$.}\;\;
\end{equation}
This equation tells us that the function $h$ is $\mu_0$-harmonic on the abelian quotient
$\m Z^2$ of $G$. 
According to a theorem of Choquet and Deny
 in \cite{ChoquetDeny60}, 
since the support  of the measure $\mu_0$ spans the group $\m Z^2$, every extremal positive $\mu_0$-harmonic function 
on this abelian group is proportionnal to a character\footnote{The proof is very short.
One notices that Equality \eqref{eqnhxyhar} gives a decomposition of $h$ as a sum of two positive harmonic functions
and hence both of them are proportional to $h$}:
\begin{equation}
\label{eqnchahar}
\chi(x,y,z)=r^xs^y
\;\;\;\mbox{\rm with $r,s>0$\; and \; 
%$\displaystyle\frac1r+\frac1s=1$ 
$1/r+1/s=1$.}
\end{equation}

\subsubsection{The partition function as a harmonic function}
If we look for a 
$\mu_0$-harmonic function $h$ which does not depend on 
$x$, Equation \eqref{eqnhxyzha} becomes 
\begin{equation}
\label{eqnhyzhar}
h(y,z)= h(y,z\!-\!y)+ h(y\!-\!1,z)
\; ,\; \;\mbox{\rm for all $(y,z)$ in $\m Z^2$.}\;\;
\end{equation}
A nice example is given by the partition function 
$(y,z)\mapsto p_y(z)$ where
\begin{eqnarray}
\label{eqnh0xyz}
p_y(z)&=&\sup_{x\in \m Z} p(x,y,z) = \lim_{x\ra\infty}p(x,y,z)
=p(z,y,z)\\
\nonumber &=&
\mbox{the number of partitions of $z$ with at most $y$ rows.}
\end{eqnarray}
%It can also be  defined as 
%\begin{equation*}
%\label{eqnh0xyz2}
%p_y(z)=\;\;
%\mbox{the number of partitions of $z$ 
%with at most $y$ rows.}
%\end{equation*}

\begin{figure}[ht]
	\centering
	\begin{tabular}{ccccccccccc} 
		&\!$\uparrow $ z\!\!\!\! &&&&&&&&&\\
		0\!&\!\bf 0\!&\!1\!&\!5\!&\!10\!&\!15\!&\!18\!&\!20\!&\!21\!&\!\color{purple} 22\!&\!\\
		0\!&\!\bf 0\!&\!1\!&\!4\!&\!8\!&\!11\!&\!13\!&\!14\!&\!\color{purple} 15\!&\!15\!&\!\\
		0\!&\!\bf 0\!&\!1\!&\!4\!&\!7\!&\!9\!&\!10\!&\!\color{purple} 11\!&\!11\!&\!11\!&\!\\
		0\!&\!\bf 0\!&\!1\!&\!3\!&\!5\!&\!6\!&\!\color{purple} 7\!&\!7\!&\!7\!&\!7\!&\!\\
		0\!&\!\bf 0\!&\!1\!&\!3\!&\!4\!&\!\color{purple} 5\!&\!5\!&\!5\!&\!5\!&\!5\!&\!\\
		0\!&\!\bf 0\!&\!1\!&\!2\!&\!\color{purple} 3\!&\!3\!&\!3\!&\!3\!&\!3\!&\!3\!&\!\\
		0\!&\!\bf 0\!&\!1\!&\!\color{purple} 2\!&\!2\!&\!2\!&\!2\!&\!2\!&\!2\!&\!2\!&\!\\
		0\!&\!\bf 0\!&\!\color{purple} 1\!&\!1\!&\!1\!&\!1\!&\!1\!&\!1\!&\!1\!&\!1\!&\!\\
		\bf 0\!&\!\color{purple} \bf 1\!&\!\bf 1\!&\!\bf 1\!&\!\bf 1\!&\!\bf 1\!&\!\bf 1\!&\!\bf 1\!&\!\bf 1\!&\!\bf 1\!&\!$\!\rightarrow$\; y \\	
		0\!&\!\bf 0\!&\!0\!&\!0\!&\!0\!&\!0\!&\!0\!&\!0\!&\!0\!&\!0\!&\! 
	\end{tabular}	
	\caption{The function $p_y(z)$ satisfies $p_y(z)=p_y(z-y)+p_{y-1}(z)$. }
	\label{figpyz}
\end{figure}
%It satisfies the equality 
%\begin{equation*}
%\label{eqnpyzhar}
%p_y(z)=p_y(z-y)+p_{y-1}(z).
%\end{equation*}
Hence the function $h_0(x,y,z):=p_y(z)$
is a $\mu_0$-harmonic function on $G$.

\subsubsection{Margulis First Theorem}
According to the first theorem of Margulis,
a theorem he proved 
in \cite{Margulis66} when he was not yet twenty, 
Choquet-Deny Theorem is still true 
on a finitely generated nilpotent group $G$
as soon as  the support of the measure 
spans $G$ as a semigroup  (See Fact \ref{facmar}).
This is why it might look surprising at first glance, 
that there exists a
positive $\mu_0$-harmonic function $h_0$ on $H_3(\m Z)$
which is not invariant by the center. 
The reason it exists is that the support of $\mu_0$ spans 
$G$ as a group but not as a semigroup.
What is more surprising is that this ``new'' 
positive harmonic function $h_0$ is 
given by the famous partition function $p_y(z)$.

\subsubsection{Switching and translating harmonic functions} 
We denote by $\si$  the automorphism of $G$ exchanging $a$ and $b$.
It is given by 
$$
\si(x,y,z)=(y,x,xy-z)\, .
$$
Since the function $h_0$ is $\mu_0$-harmonic,
the function 
$$
h_1:= h_0\circ \si: (x,y,z)\mapsto p_x(xy-z)
$$
is also $\mu_0$-harmonic.
For $g_0$ in $G$, we denote by $\rho_{g_{_0}}:g\mapsto gg_0$
the right translation by $g_0$ on $G$.
The translated functions $h_0\circ\rho_{g_{_0}}:g\mapsto h_0(gg_0)$ 
and $h_1\circ\rho_{g_{_0}}:g\mapsto h_1(gg_0)$ are also $\mu_0$-harmonic.

%13
\subsection{Classification of positive $\mu_0$-harmonic functions}
\bq
We can now state our main result for the southwest measure $\mu_0$ introduced in \eqref{eqnmu0}.
\eq

\subsubsection{Main result and strategy}
\bt
\label{thharmu0}
Let $h$ be an extremal  positive $\mu_0$-harmonic function on the Heisenberg group
$G:=H_3(\m Z)$. Then, up to a multiplicative scalar,\\
- either $h=\chi$ is a  $\mu_0$-harmonic character $\chi(x,y,z)=r^xs^y$ as in \eqref{eqnchahar},
\\
- or $h= h_0\circ \rho_{g_{_0}}$ is a translate of the  function $h_0(x,y,z):=p_y(z)$, \\
- or $h= h_1\circ \rho_{g_{_0}}$ is a translate of the  function $h_1(x,y,z):=p_x(xy-z)$.
\et

This classification  has been annouced 
%as a ``$60^{\rm th}$ birthday theorem'' 
on May $28^{th}$ 2019 in a short informal videotaped 
speech at the Cetraro Conference ``Dynamics of group actions''.

As we will see the partition function $p(x,y,z)$ will play a crucial role in the proof of Theorem \ref{thharmu0}. Indeed,
in Chapter \ref{secparfun},  we will prove a ratio limit theorem for the partition function $p(x,y,z)$. In Chapter \ref{secproof},
we will deduce from this ratio limit theorem the proof of Theorem \ref{thharmu0}.

Notice that the positive $\mu_0$-harmonic function $h_0$ vanishes.
In particular,  it does not satisfy the Harnack inequality.
This contrasts with the case studied in \cite{Margulis66}
where the support of $\mu$ spans 
$G$ as a semigroup.

In the last Section \ref{secfinsup}, we will  present the classification of the positive $\mu$-harmonic functions, 
for all finitely supported measures $\mu$ on $G$. 

\subsubsection{Dealing with a probability measure}
At first glance it might look a little bit weird to deal 
with $\mu_0$-harmonic function for a measure $\mu_0$ which is not a probability measure. 
We could have worked instead with the probability measure
$$
\wt\mu_0=\tfrac12(\de_{a^{-1}}+\de_{b^{-1}})
\;\;\mbox{\rm 
	where $a:=(1,0,0)$ and $b=(0,1,0)$}
$$
which is the law for the 
{\it southwest random walk on} $\m H_3(\m Z)$.
The $\wt\mu_0$-harmonic functions $\wt h$ on $G$ are the functions satisfying
\begin{equation*}
\label{eqnptxyzha}
\wt 
h= P_{\wt\mu_0}h
\;\; \mbox{\rm where}\;\;
P_{\wt\mu_0}h(x,y,z)=\frac12\left(\,  \wt h(x\!-\!1,y,z\!-\!y)+ \wt h(x,y\!-\!1,z)\,\right) ,
\end{equation*}
is the expected value of the function $h$ after one step of the random walk.

It is easy to see that  
\begin{equation*}
\mbox{$h(x,y,z)$ is $\mu_0$-harmonic
	if and only 
	$
	2^{-x-y}\, h(x,y,z)
	$ is $\wt\mu_0$-harmonic\, .
}
\end{equation*}
Therefore, classifying positive $\mu_0$-harmonic functions is equivalent to classifying positive $\wt\mu_0$-harmonic functions.
The main reason we are  using $\mu_0$ instead of $\wt\mu_0$ is 
to get rid of all these factors $2^{-x-y}$.

\subsubsection{Extremal superharmonic functions}
We have seen in \eqref{eqnpgsuphar} that the partition function $p$ 
is $\mu_0$-superharmonic and more precisely that it is the potential 
of $\mu_0$ at $e$.
For every $g_0$ in $G$, the function $p\circ \rho_{g_{_0}}$
is also a potential of $\mu_0$ at $g_0^{-1}$.
By Riesz decomposition Theorem, those potentials
are exactly the extremal positive $\mu_0$-superharmonic functions on $G$
which are not harmonic. Therefore, \\
%\begin{Lem} 
%\label{lemsuphar}
\centerline{\it every extremal positive $\mu_0$-superharmonic functions $f$ on $G$
which}
\centerline{\it is not harmonic is a translate $f=p\circ \rho_{g_{_0}}$
of the function $p(x,y,z)$.}
%\end{Lem}

We would like to end this introduction by pointing out
other limit theorems for random walks 
on the Heisenberg group and other nilpotent groups
as  \cite{Guivarch71}, \cite{Breuillard05}, 
\cite{Breuillard10},\cite{DiaconisHough15} eventhough we will not use them here.

%2
\section{The partition function}
\label{secparfun}
The aim of this chapter is to prove the ratio limit theorem 
(Proposition \ref{proratlim})
for the partition function $p(x,y,z)$.
%which will be used in the proof of Theorem \ref{thharmu0}.

%21
\subsection{The unimodality of the partition functions}
\label{secunipar}

\bq
We recall that, for $x, y, z\geq 0$,  the partition function $p(x,y,z)$ 
counts the number of partitions of $z$
included in a rectangle with side lengths
$x$ and $y$. 
See Definition \eqref{eqnpar} and Figure \ref{figparhar}.
\eq

This function 
is non-zero for $0\leq z\leq xy$ and satisfies the equalities 
\begin{equation}
\label{eqnpxypyx}
p(x,y,z)=p(y,x,z)=p(x,y,xy-z). 
\end{equation}
This function is well-studied. For instance one has
\begin{Fact}
\label{facparuni}
{\bf (Cayley, Sylvester 1850)}	
The sequence $z\mapsto p(x,y,z)$ 
is unimodal, i.e. it is increasing for $z\leq xy/2$.
\end{Fact}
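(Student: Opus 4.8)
The plan is to recognize that the generating function $\sum_{z\ge 0} p(x,y,z)\,q^z$ equals the Gaussian $q$-binomial coefficient $\binom{x+y}{y}_q=\big(\prod_{i=1}^{x+y}(1-q^i)\big)\big/\big(\prod_{i=1}^{x}(1-q^i)\prod_{i=1}^{y}(1-q^i)\big)$, so that the Fact is exactly the classical unimodality of the coefficients of $\binom{x+y}{y}_q$. Since the symmetry $p(x,y,z)=p(x,y,xy-z)$ from \eqref{eqnpxypyx} already makes the sequence palindromic about $z=xy/2$, it suffices to produce, for each $z<xy/2$, an \emph{injective} linear map from the space spanned by the partitions of area $z$ in the box into the space spanned by those of area $z+1$; this gives $p(x,y,z)\le p(x,y,z+1)$ exactly in the range where the sequence should be increasing.

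I would build such injections by equipping $V:=\bigoplus_z V_z$, where $V_z$ is the free vector space on partitions of area $z$ inside the $x\times y$ rectangle (so $\dim V_z=p(x,y,z)$), with the structure of a finite-dimensional representation of $\mathfrak{sl}_2$. Concretely, let $H$ act on $V_z$ as the scalar $2z-xy$, and introduce a lowering operator $F\colon V_z\to V_{z-1}$ and a raising operator $E\colon V_z\to V_{z+1}$ supported on the covering relations of the inclusion order (removing, resp. adding, a single box), with carefully chosen coefficients. Once the defining relations $[H,E]=2E$, $[H,F]=-2F$ and $[E,F]=H$ hold, the representation theory of $\mathfrak{sl}_2$ concludes at once: $V$ splits into irreducibles, and in each irreducible the raising operator is injective on every weight space of \emph{negative} weight (no such space is a highest-weight line). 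Hence $E\colon V_z\to V_{z+1}$ is injective whenever $2z-xy<0$, i.e.\ for $z<xy/2$, which is precisely the assertion.

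The main obstacle is the explicit construction of $E$ and $F$ together with the verification of $[E,F]=H$: the commutators with $H$ are automatic from the grading, but tuning the coefficients on added versus removed boxes so that $[E,F]$ acts as the correct scalar $2z-xy$ on each $V_z$ is the real content (this is the linear-algebra computation of Proctor). Equivalently, and perhaps more transparently, one can invoke the hard Lefschetz theorem for the Grassmannian $\mathrm{Gr}(y,x+y)$: its cohomology has the Schubert basis indexed by partitions in the $x\times y$ box, with $\sigma_\lambda$ in degree $2|\lambda|$, so its Poincaré polynomial is exactly $\binom{x+y}{y}_q$; cup product with the hyperplane class raises $z$ by one and is injective below the middle complex dimension $xy$, which is again the desired unimodality.

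Finally, I would note that a purely combinatorial route avoids representation theory altogether: the KOH (O'Hara) identity writes $\binom{x+y}{y}_q$ as a nonnegative integer combination of products of Gaussian binomials, each factor being symmetric and unimodal about a common center. Since a product of symmetric unimodal polynomials sharing a center of symmetry is again symmetric and unimodal, this proves the Fact elementarily, at the cost of considerably more intricate bookkeeping than the $\mathfrak{sl}_2$ argument above.
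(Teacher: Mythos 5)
Your proposal is correct, and its engine is the same as the paper's: both reduce unimodality to the fact that in a finite-dimensional $\mathfrak{sl}_2$-representation the raising operator is injective on every weight space of negative weight. But the construction of the representation is genuinely different. The paper follows Hughes: it takes the principal $\mathfrak{sl}_2$-triple inside $\mathfrak{sl}(x+y,\mathbb{R})$ acting on $V=\Lambda^x(\mathbb{R}^{x+y})$, so the representation exists for free and the work is the weight computation identifying $\dim V_{xy-2z}$ with $p(x,y,z)$. You instead build the $\mathfrak{sl}_2$-structure directly on the free vector space spanned by the partitions in the box (Proctor's route, which the paper cites only as an alternative reference): there the weight multiplicities equal $p(x,y,z)$ by construction, and the work is shifted to tuning the coefficients of $E$ and $F$ so that $[E,F]=H$ --- a step you rightly flag as the real content but leave to Proctor, which puts your write-up at the same ``sketch'' level as the paper's (``One checks that\dots''). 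In fact the two representations are isomorphic: the standard basis of $\Lambda^x(\mathbb{R}^{x+y})$ is indexed by exactly these partitions, and the principal triple acts by adding and removing boxes, so Proctor's argument is an explicit coordinatization of Hughes'. What your version buys is self-containedness (no Lie theory beyond the defining relations and the classification of irreducibles); what the paper's buys is that no commutation relation need be verified. Your two further alternatives are more substantially different: hard Lefschetz for $\mathrm{Gr}(y,x+y)$ trades algebra for geometry (and is correctly set up --- the Schubert basis gives Poincar\'e polynomial $\binom{x+y}{y}_q$, and multiplication by the hyperplane class is injective below the middle degree), while the KOH identity is the only route avoiding representation theory altogether, at the cost of much heavier combinatorial bookkeeping.
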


The proof below relies on the theory of finite dimensional representations of
the Lie algebra $\g s\g l(2,\m R)$. This proof  is due to Hughes in \cite{Hughes77}.
See \cite{Proctor82} for an elementary proof and 
\cite[p. 522]{Stanley89} for a survey of various generalizations.

\begin{proof}[Sketch of proof of Fact \ref{facparuni}]
Let $n:= x+y$ and $(Y,H,X)$ be the principal $\g s\g l_2$-triple in the Lie algebra
$\g g:=\g s\g l(n,\m R)$ so that
$H={\rm diag}(n\!-\!1,n\!-\! 3,\ldots,-n\!+\!1)$.
This Lie algebra $\g g$ has a natural representation 
in the space $V:=\La^x(\m R^{n})$. One checks that $p(x,y,z)= \dim V_{xy-2z}$ 
where $V_\la$ denotes the eigenspace of $H$ in $V$ for the eigenvalue $\la$.
The theory of representations of $\g s\g l(2,\m R)$ tells us that for $\la> 0$, 
one always has $\dim V_\la\leq \dim V_{\la-2}$.
\end{proof}

%22
\subsection{The ratio limit theorem}
\label{secratlim}
\bq
Here is the
{\it Ratio Limit Theorem for $p(x,y,z)$}:
\eq

\bp
\label{proratlim} 
One has 
$
\displaystyle 
\lim_{\substack{z\ra \infty\\ xy\!-\!z\ra\infty}} \frac{p(x,y,z-1)}{p(x,y,z)}
\;=\;1.
$
\ep
This limit is taken along sequences  of positive triples
$(x,y,z)$ such that $z\ra \infty$ and $xy\!-\!z\ra\infty$.

With this generality this theorem seems to be new, eventhough there already exist
very precise estimates of $p(x,y,z)$ in certain ranges.
For instance, when $x,y\geq z$, the partition function $p(x,y,z)=p(z,z,z)$ depends only on $z$. 
It is the classical partition function  $p(z)$
which admits a famous asymptotic expansion 
due to Hardy and Ramanujan in 1920
(See \cite[Ch. 5]{Andrews76}).
These estimates have been extended to larger ranges
of $(x,y,z)$ 
as in 
\cite{Takacs86} and \cite{MePaPe18}. 
We will not use them.
\vs

The proof of Proposition \ref{proratlim} is tricky but elementary.
The rough idea is to introduce a relation 
between the set of partitions $w$ of $z$ and the set of partitions $w'$ of $z-1$
such that ``most of the time'' when $w$ and $w'$ are related,
they are related to approximately
the same number of partitions (see Lemma \ref{lemwwfwfw}). 

Because of \eqref{eqnpxypyx}, we can assume that $y\leq x$ 
and $z\leq xy/2$.

%23
\subsection{When the height of the rectangles is bounded}
\bq
In this section, we  deal with the easy case when the height
$y$ remains bounded.
\eq

\bl
\label{lemratlim}  For all $y\geq 1$, one has
$
\displaystyle 
\lim_{\substack{x,z\ra \infty\\ z\,\leq \,xy/2}} \frac{p(x,y,z-1)}{p(x,y,z)}=1.
$
\el
Note that in this limit $y$ is fixed, and 
$x$, $z$ go to $\infty$ with $z\leq xy/2$.

\begin{proof}[Proof of Lemma \ref{lemratlim}]
This follows from  Lemma \ref{lemratlim1} and the inequalities
$$
0\;\leq\; 
p(x,y,z)-p(x,y,z-1) 
\;\leq\; 
p(x,y-1,z).
$$

The first inequality is the unimodality of the partition function. 

For the second inequality, just notice 
that one can inject the set of partitions of $z$ of height exactly $y$ 
inside the set of partitions of $z-1$ of 
height at most $y$ by removing the last square in the bottom row
of each partition.
\end{proof}

We have used the following Lemma.
\bl
\label{lemratlim1} $a)$ For all $x,y,z\geq 1$, one has 
$\;\; p(x,y,z) \leq   z^{y-1}$.\\
$b)$
For all $y\geq 1$, there exists $\al_{{y}}>0$ such that,
for all $x,z\geq 1$ with $z\leq xy/2$, one has
%\begin{equation}
$\;\; p(x,y,z) \geq  \al_y\, z^{y-1}.$
%\end{equation}
\el

\begin{proof}[Proof of Lemma \ref{lemratlim1}]
$a)$ The lengths of the 
last $y\!-\!1$ rows of the partition are bounded by $z-1$ and 
the first row is deduced from the others.

$b)$ Choose $y\!-\! 1$
integers $m_1,..,m_{y\!-\!1}$ in the interval $[0,\frac{z}{y^2}]$.
and keep only those for which the system
$$
n_1-n_2=m_1\;,\; \ldots\; ,\;\; n_{y-1}-n_y=m_{y\!-\!1}\;\;{\rm and}\;\; n_1+\cdots +n_y=z
$$ 
has a solution $(n_1,\ldots ,n_y)$ in $\m Z^y$.  
But then one has 
$$
n_y= \frac{1}{y}\,(z-m_1-2m_2-\cdots -(y\!-\!1)m_{y\!-\!1})\geq 0\;\;\; {\rm and}
$$
$$
n_1= n_y +m_1+\cdots +m_{y\!-\!1}\leq \frac{z}{y}+\frac{z}{y}\leq x.
$$
This gives about $\frac1y (\frac{z}{y^2})^{y-1}$ partitions of $z$ with  $x\geq n_1\geq \cdots\geq n_y\geq 0$.
\end{proof}

%24
\subsection{Inner and outer corner of a partition}
\bq 
We now introduce notations that will stengthen the connection 
between partitions and words in the Heisenberg group.
\eq

We recall that $a=(1,0,0)$ and $b=(0,1,0)$ are the generators of the Heisenberg group $G=H_3(\m Z)$.
Let  
$$
G^+:=\{g=(x,y,z)\in G \mid x,y\geq 0
\;\;{\rm and}\;\; 0\leq z\leq xy\}
$$ 
be the semigroup generated by $a$ and $b$
and let 
\begin{equation}
\label{eqngaabab}
c=aba^{-1}b^{-1}=(0,0,1) 
\end{equation}
be the generator of the center $Z$ of $G$.

Let $B_n:=\{a,b\}^n$ be the set of finite words $w$ in $a$, $b$ of length 
$\ell_w=n$ and let $B:=\cup_{n\geq 0}B_n$. 
Using the product law in $G$, to each word $w\in B$,
we can associate an element $g_w$ in $G^+$. 
The partition function gives the size of the fibers of this map~:
\begin{equation}
\label{eqnpgw}
p(g)=|B_g|
\;\;{\rm where}\;\; \;
B_g:=\{w\in B\mid g_w=g\}.
\end{equation}
Indeed, as explained in Figure \ref{figparwor},
when $g=(x,y,z)$, each word $w$ in $B_g$
corresponds uniquely to a partition of $z$ 
included in a rectangle with side lengths
$x$ and $y$.
We introduce now the following relation $\mc R$ on $B$,
\begin{eqnarray*}
\label{eqnrww}
\mc R&:=&
\{ (w,w')\in B\times B\mid 
w=w_0ab w_1
\;{\rm and}\;  w'=w_0ba w_1\\
\nonumber&&
\hspace*{10em} \mbox{\rm for some}\; w_0, w_1 \; {\rm in}\; B\}.
\end{eqnarray*}
Let $\pi:\mc R\ra B$ and $\pi':\mc R\ra B$ be the two projections
$$
\pi(w,w')=w
\;\;{\rm and}\;\;
\pi(w,w')=w'.
$$
For $w$, $w'$ in $B$, the cardinality of the fiber $f_w:=|\pi^{-1}(w)|$ is the number of pairs $ab$ occuring in the word $w$. The size $f_w$ is also the number of {\it inner corners}
of the partition associated to $w$. See Figure \ref{figparpar}.
Similarly the cardinality of the fiber $f'_{w'}:=|\pi'^{-1}(w')|$ is the number of pairs $ba$ occuring in the word $w'$. It is equal  to the number of {\it outer corners}
of the partition associated to $w'$.

\begin{figure}[ht]
\centerline{\includegraphics[width=10cm]{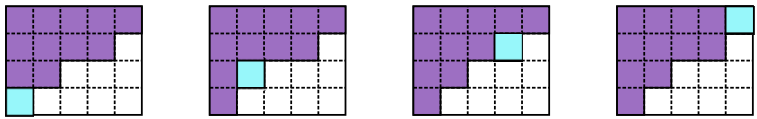}}
\caption{ {\small The fiber $\pi^{-1}(w)$ of the word $w=ababaabab$ has size $f_w=4$.}} 
\label{figparpar}
\end{figure}

The following lemma compares the size of these fibers.

\bl
\label{lemwwfwfw}
$a)$ For all $(w,w')\in \mc R$, 
one has $g_w=g_{w'}c$.\\
$b)$ For all $(w,w')\in \mc R$, 
one has $|f_w-f'_{w'}|\leq 2$.\\
In particular, one also has $f_{w}\leq 3f'_{w'}$.
\el
\begin{proof}[Proof of Lemma \ref{lemwwfwfw}]
$a)$  This follows from the equality $c=aba^{-1}b^{-1}$.

$b)$ Comparing the number of pairs $ab$ and pairs $ba$
occuring in $w$ and in $w'$, one gets $|f_w-f_{w'}|\leq 1\;$ and 
$\;|f_{w'}-f'_{w'}|\leq 1$.
\end{proof}

%25
\subsection{Partitions with bounded number of corners}
\label{secinncor}
\bq
We will need to control  the number $p_{_{\leq i}}(x,y,z)$ of partitions of $z$
included in a rectangle with side length $x$, $y$ that have at most $i$
inner corner.
\eq

The following Lemma \ref{lembgi}
tells us that  $p_{_{\leq i}}(x,y,z)$ is negligeable 
compared to the total number of partitions $p(x,y,z)$.

\bl
\label{lembgi} 
For all $i\geq 0$, one has 
$\;\;
\displaystyle \lim_{\substack{x,y,z\ra \infty\\ z\,\leq \,xy/2}} 
\frac{p_{_{\leq i}}(x,y,z)}{p(x,y,z)}=0.
$
%where 
\el

The limit is taken along sequences 
%of triples $(x,y,z)$ 
where  all coordinates $x$, $y$, $z$ go to $\infty$
and $z\leq xy/2$.

\begin{proof}[Proof of Lemma \ref{lembgi}]
Use the following slight upgrade of Lemma \ref{lemratlim1}.	
\end{proof}

\bl
\label{lemratlim2} 
$a)$ For all $x,y,z,i\geq 1$, one has 
$\;\; p_{_{\leq i}}(x,y,z) \leq   (2z)^{2i}$.\\
$b)$
For all $j> 1$, there exists $z_0=z_0(j)\geq 1$ such that,
for all $x,y,z\geq 1$ with $4\, j \leq y\leq x$ and $z_0\leq z\leq xy/2$, one has 
%\begin{equation}
$\;\; p(x,y,z) \geq   z^{j}.$
%\end{equation}
\el

\begin{proof}[Proof of Lemma \ref{lemratlim2}] It is similar to Lemma \ref{lemratlim1}.
	
$a)$ We can assume $x=y=z$. We want to choose integers $a_1,\ldots, a_i\geq 1$
and $m_1,\ldots, m_i\geq 0$, bounded by $z$ such that $a_1m_1+\cdots+a_im_i=z$.
There are at most $(2z)^{2i}$ possibilities.
	
$b)$ We give a rough count. 
Choose $L_y\leq y$ as large as possible such that, setting $\ell_y=[L_y/2]$ and
$\ell_x=[z/L_y]$, one has $\ell_y\leq \ell_x\leq x/2$. 
%Let $4\leq\ell_y\leq \ell_x$ 
%be integers to be fixed later with $\ell_y\leq [y/2]$ and $\ell_x\leq [x/2]$such that 
There exists a partition $w_0$
of $z$ with $L_y$ rows and all of whose rows have length $\ell_x$ or $\ell_x\!+\!1$.
For every sequence $\ell_x>m_1\geq\cdots\geq m_{\ell_{_y}}\geq 0$,
we can modify this partition $w_0$ by adding $m_j$ spots  to the $j^{\rm th}$ highest row of $w_0$
and removing $m_j$ spots to the $j^{\rm th}$ lowest row of $w_0$, for all $j\leq \ell_y$.
This gives  $N$ different partitions of $z$ where 
$N:=\binom{\ell_x+\ell_y-1}{\ell_y}\geq \max(2,\ell_x/\ell_y)^{\ell_y} $.
Hence, one has $p(x,y,z)\geq N$.	

{\bf First case} : when $z\leq y^2/2$. \; In this case, we have $L_y= [\sqrt{2z}]$.

One gets $ N\geq 2^{\ell_y}\geq 2^{\sqrt{z}/2}\geq z^j$.

{\bf Second case} : when $z\geq y^2/2$. \; In this case, we have $L_y=y$.

If $z\leq y^4$,
one gets $N\geq 2^{\ell_y}\geq 2^{\sqrt[4]{z}/4}\geq z^j$.

If $z\geq y^4$,
one gets 
$N\geq (\frac{\ell_x}{\ell_y})^{\ell_y}
\geq (\frac{z}{y^2})^{\ell_y}\geq {\sqrt{z}}^{\,\ell_y}\geq z^{y/4}\geq z^j$.
%This ends the proof. Note that the assumption $y\geq 4j$ in this lemma 
%is not optimal and could probably be replaced by $y>j$.
\end{proof}

%26
\subsection{When the height of the rectangles is unbounded}
\bq 
We can now explain the proof of the ratio limit theorem. 
\eq

\begin{proof}[Proof of Proposition \ref{proratlim}]
By \eqref{eqnpxypyx} and Lemma \ref{lemratlim}, we can  assume that 
$x,y,z$  are going to $\infty$ with $y\leq x$ and $z\leq xy/2$.
For $g=(x,y,z)$ in $G^+$, one sets 
$
\mc R_g:=\{(w,w')\in \mc R\mid g_w=g\},
$
and one computes
\begin{eqnarray}
\label{eqnpgbg}
p(g)=|B_g|=\eps_g +\sum_{(w,w')\in \mc R_g}\frac{1}{f_w} 
\end{eqnarray}
where $\eps_g =1$ if $\mc R_g=\emptyset$ and 
$\eps_g=0$ otherwise.
Similarly, by Lemma \ref{lemwwfwfw}.$a$, one has 
\begin{eqnarray}
\label{eqnpggabg}
p(gc^{-1})=|B_{gc^{-1}}|=\eps'_g +\sum_{(w,w')\in \mc R_g}\frac{1}{f'_{w'}} 
\end{eqnarray}
where $\eps'_g =0$ or $1$.
Combining \eqref{eqnpgbg}, \eqref{eqnpggabg} and Lemma \ref{lemwwfwfw}.$b$, one gets
\begin{eqnarray*}
\label{eqnpgpgga}
|p(g)\!-\! p(gc^{-1})|
&\leq& 
2+\!\!\sum_{(w,w')\in \mc R_g}\!\!\frac{2}{f_wf'_{w'}}
\;\; \leq\;\;
2+\!\! \sum_{(w,w')\in \mc R_g}\frac{6}{f_w^2}
\;\;\leq\;\; 
2+\!\sum_{\substack{w\in B_g\\f_w\neq 0}}\frac{6}{f_w}
\end{eqnarray*}
%be the size of the set
%\begin{equation}
%\label{eqnbgi}
%B_{g,i}:=\{ w\in B_g\mid f_w\leq i\}.
%\end{equation}
%That is, $p_{_{\leq i}}(g)$  counts 
%the partitions with at most $i$ inner corners. 
We recall that $p_{_{\leq i}}(g)$ is the number of $w$ 
with $f_w\leq i$. Therefore, one has
\begin{eqnarray*}
|p(g)-p(gc^{-1})|
&\leq& 
2+6\, p_{_{\leq i}}(g)+\frac{6}{i}\, p(g)
\;\;\;\;\mbox{\rm for all $i\geq 1$}.
\end{eqnarray*}
We let $x,y,z$ go to infinity with $z\leq xy/2$.
According to Lemma \ref{lembgi}, for all $i\geq 1$, the ratios $p_{_{\leq i}}(g)/p(g)$ converge to $0$. 
Therefore 
\begin{eqnarray*}
\label{eqnpgpgga2}
\limsup \left|\frac{p(gc^{-1})}{p(g)}-1\right|
&\leq&  \frac{6}{i}.
\end{eqnarray*}
and therefore the sequence $\displaystyle\frac{p(gc^{-1})}{p(g)}$ converges to $1$ as required.
\end{proof}

%3
\section{Positive harmonic functions}
\label{secproof}

We now start the classification of extremal positive  $\mu_0$-harmonic
functions $h$. 
In Section \ref{secpyzhar}, we deal with the case where $h$ has a non-zero limit along an orbit of $a^{-1}$ or $b^{-1}$.
In Sections \ref{sechardec} and \ref{secusirat},  we deal with the case where $h$ goes to zero along all orbits of $a^{-1}$ and $b^{-1}$.
In Section \ref{secfinsup} we present the generalization of this classification
to any finitely supported measure $\mu$ on $G$.

%31
\subsection{The function $p_y(z)$ as an harmonic function}
\label{secpyzhar}
\bq
In this section we characterize the functions $h_0\circ \rho_{g_{_0}}$ and $h_1\circ \rho_{g_{_0}}$
among extremal positive $\mu_0$-harmonic functions
by their behavior along the orbits $a^{-\m N}g_0$ and $b^{-\m N}g_0$ of $G$.
\eq

We recall that $a=(1,0,0)$ and $b=(0,1,0)$ are the generators of the Heisenberg group $G=H_3(\m Z)$, that 
$\mu_0=\de_{a^{-1}} +\de_{b^{-1}}$,
and  that $h_0$ and $h_1$ are 
the $\mu_0$-harmonic functions 
$h_0(x,y,z)=p_y(z)$ and $h_1(x,y,z)=p_x(xy-z)$. 

We first begin by an alternative construction of the function $h_0$.
Let $H_0$ be the abelian subgroup of $G$ generated by $a$
and let $\psi_0:={\bf 1}_{H_0}$ be the characteristic function of $H_0$.
One has 
\begin{eqnarray*} 
	\psi_0(x,y,z)&=1& \mbox{when $y=z=0$}\\
	&=0& \mbox{otherwise.}
\end{eqnarray*}

\bl
\label{lemlimpmunh0} One has the equality  
$\displaystyle 
\;\; h_0\;=\;\lim_{n\ra\infty} P^n_{\mu_0}\psi_0 .
$
\el

\begin{Rem}
Since the function $\psi_0$ is $\mu_0$-subharmonic, i.e.
$\;
\psi_0\leq P_{\mu_0}\psi_0\, ,
$
the sequence $n\mapsto  P_{\mu_0}^n\psi_0$ is increasing.
\end{Rem}
\begin{proof}[Proof of Lemma \ref{lemlimpmunh0}] One can compute explicitely
this function  $P_{\mu_0}^n\psi_0$. It does not depend on $x$.
Indeed $P_{\mu_0}^n\psi_0(x,y,z)$ is the number of ways of writing 
the element $(n-y,y,z)$ as a word $w$ of length $n$ in $a$ and $b$.
This proves the equality, involving the partition function, 
$$
P_{\mu_0}^n\psi_0(x,y,z)=p(n-y,y,z)\,.
$$
Letting $n$ go to $\infty$, we conclude using \eqref{eqnh0xyz}.
\end{proof}

\begin{Lem}
\label{lemharpyz}
Let $g_0\in G$ and $h$ be an extremal positive $\mu_0$-harmonic function on $G$ such that 
$\displaystyle\limsup_{n\ra\infty}h(a^{-n}g_0)>0$. Then one has 
$h=\la\, h_0\circ \rho_{g_{_0}}$ with $\la>0$.

In particular, the positive $\mu_0$-harmonic function 
$h_0\circ \rho_{g_{_0}}$ is extremal.
\end{Lem}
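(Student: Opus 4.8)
The plan is to turn the one-sided hypothesis into a pointwise lower bound $h\ge t\,h_0$ with $t>0$, and then to upgrade this domination to proportionality using the extremality of $h$. Since $f\mapsto f\circ\rho_{g_{_0}}$ is a bijection of the cone of positive $\mu_0$-harmonic functions preserving extremal rays, under which the hypothesis $\limsup_n h(a^{-n}g_0)>0$ becomes $\limsup_n (h\circ\rho_{g_{_0}})(a^{-n})>0$, I may replace $h$ by $h\circ\rho_{g_{_0}}$ and assume from now on that $g_0=e$; the goal is then to prove $h=\la\,h_0$, the general case following by translating back.

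The key observation is that, for \emph{every} positive $\mu_0$-harmonic function, the sequence $n\mapsto h(a^{-n})$ is non-increasing. Indeed, evaluating the harmonicity relation \eqref{eqnhghar} at $g=a^{-n}$ gives
\[
h(a^{-n})=h(a^{-n-1})+h(b^{-1}a^{-n}),
\]
and the last term is $\ge 0$ because $h\ge 0$. Consequently the $\limsup$ is in fact a limit $t:=\lim_n h(a^{-n})>0$, and monotonicity forces $h(a^{-k})\ge t$ for every $k\in\m Z$. In other words $h\ge t\,\psi_0$, where $\psi_0=\mathbf 1_{H_0}$ is the characteristic function of the orbit $H_0=\{a^{-k}\}$. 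This is the heart of the argument: it is what promotes information about $h$ in the single direction $a^{-\m N}$ into a bound on the whole subgroup $H_0$.

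Next I would push this minorant through the averaging operator. As $P_{\mu_0}$ is positive and $h=P^n_{\mu_0}h$ for all $n$, applying $P^n_{\mu_0}$ to $h\ge t\,\psi_0$ gives $h\ge t\,P^n_{\mu_0}\psi_0$, and letting $n\to\infty$ yields $h\ge t\,h_0$ by Lemma \ref{lemlimpmunh0}. Now set $c:=\sup\{s\ge 0\mid s\,h_0\le h\}$. The bound just obtained gives $c\ge t>0$, while $c\le h(e)<\infty$ since $h_0(e)=1$. The difference $h-c\,h_0$ is a non-negative $\mu_0$-harmonic function; if it were not identically zero, then $h=c\,h_0+(h-c\,h_0)$ would express $h$ as a sum of two positive harmonic functions, which by extremality of $h$ must be proportional, so that $h-c\,h_0$ is a multiple of $h_0$ -- contradicting the maximality of $c$. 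Hence $h-c\,h_0\equiv 0$, that is $h=c\,h_0$ with $c>0$, as desired.

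It remains to establish the ``in particular'' statement. Because right-translation preserves extremal rays, it is enough to show that $h_0$ itself is extremal. Here I would invoke Choquet's theorem \cite{Choquet61} to write $h_0=\int h_\al\,d\nu(\al)$ as an average of extremal positive $\mu_0$-harmonic functions, normalised by $h_\al(e)=1$ (recall $h_0(e)=p_0(0)=1$, so $\nu$ is a probability measure). By the monotonicity above, each $t_\al:=\lim_n h_\al(a^{-n})$ exists in $[0,1]$; from the representation $\int h_\al(a^{-n})\,d\nu=h_0(a^{-n})=1$ for all $n$, so by monotone convergence $\int t_\al\,d\nu=1$. Since $t_\al\le 1$ and $\nu$ is a probability measure, this forces $t_\al=1$ for $\nu$-almost every $\al$; for such $\al$ one has $\limsup_n h_\al(a^{-n})>0$, so the first part applies and gives $h_\al=h_0$ after normalisation. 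Thus $\nu$ is the Dirac mass at $h_0$, and $h_0$ is extremal. I expect this last paragraph, rather than the main argument, to require the most care: one must justify the interchange of limit and integral and verify that the set of $\al$ with $t_\al=0$ is $\nu$-negligible.
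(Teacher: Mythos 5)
Your proof of the main assertion is essentially the paper's own: monotonicity of $n\mapsto h(a^{-n})$ from positivity and harmonicity, the lower bound $h\geq t\,\psi_0$ on the whole subgroup $H_0=a^{\m Z}$ (a point the paper states tersely and you rightly make explicit), then $h\geq t\,P^n_{\mu_0}\psi_0\ra t\,h_0$ via Lemma \ref{lemlimpmunh0}, and extremality to conclude; your supremum $c=\sup\{s\geq 0\mid s\,h_0\leq h\}$ is just a more detailed version of the paper's one-line extremality step.

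Where you genuinely diverge is the ``in particular'' part. The paper argues elementarily: in a decomposition $h_0=h'_0+h''_0$, since $h_0(a^{-n})=1$ for all $n$, at least one summand has a positive limit $\la'$ along $a^{-\m N}$, and the minorization step (which never used extremality) gives $h'_0\geq \la' h_0$; applying it to both summands, whose limits sum to $1$, forces $h'_0=\la' h_0$. You instead invoke Choquet's integral representation $h_0=\int h_\al\,{\rm d}\nu(\al)$ and apply the first part to the extremal components. This buys something: part one is applied only to functions actually known to be extremal, whereas the paper's ``by the previous discussion'' silently requires the two-summand trick above, since $h'_0$ is not known to be extremal. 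But it costs something too, namely the wrinkle you flag without resolving: you cannot simply normalize all components by $h_\al(e)=1$, because the cone contains extremal functions vanishing at $e$ (e.g.\ $h_0\circ\rho_{g_{_1}}$ whenever $p_{y_1}(z_1)=0$). The fix is to split $\nu$ into the part carried by components with $h_\al(e)=0$ --- which by your monotonicity argument vanish on all of $a^{-\m N}$ and so contribute nothing to $h_0(a^{-n})=1$ --- and the normalizable part, to which your dominated-convergence argument applies and yields one extremal component equal to $h_0$; that single equality already gives extremality of $h_0$, without needing $\nu$ to be a Dirac mass. Finally, one shared caveat: both you and the paper reduce to $g_0=e$ by passing to $h\circ\rho_{g_{_0}}$, but translating back then gives $h=\la\, h_0\circ\rho_{g_{_0}^{-1}}$ rather than $\la\, h_0\circ \rho_{g_{_0}}$ (test $g_0=c$, $h=h_0\circ\rho_{c^{-1}}$); this inversion is an issue in the statement itself, not one introduced by your argument.
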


\begin{proof}[Proof of Lemma \ref{lemharpyz}]
	We can assume that $g_0=e$.
Since the function $h$ is positive and $\mu_0$-harmonic, the sequence
$n\mapsto h(a^{-n})$ is decreasing. Hence it has a limit $\la$. 
By assumption, this limit $\la$ is positive. By construction,
one has the equality 
$h\geq \la \psi_0$. Since $h$ is $\mu_0$-harmonic, one also has the inequality
$h\geq \la P^n_{\mu_0}\psi_0$ for all $n\geq 0$. Therefore, by Lemma \ref{lemlimpmunh0},
one gets 
$h\geq \la h_0$. 
Since $h$ is extremal, it has to be proportional to $h_0$ 
and therefore one has $h=\la h_0$.

It remains to check that $h_0$ is extremal. 
If one can write $h_0=h'_0+h''_0$ with both $h'_0$ and $h''_0$ 
positive $\mu_0$-harmonic, for at least one of them, say $h'_0$,
the sequence $h'_0(a^{-n})$ does not converge to $0$ for $n\ra\infty$. 
Hence, by the previous discussion, $h'_0$ is proportional to $h_0$.
This proves that $h_0$ is extremal.
\end{proof}

Exchanging the role of $a$ and $b$ we get
\begin{Cor}
	\label{corharpyz}
	Let $h$ be an extremal positive $\mu_0$-harmonic function on $G$ such that 
	$\displaystyle\limsup_{n\ra\infty}h(b^{-n}g_0)>0$. Then one has $h=\la \, h_1\circ \rho_{g_{_0}}$ for some $\la>0$.
	
	In particular, the positive $\mu_0$-harmonic function 
	$h_1\circ \rho_{g_{_0}}$ is extremal.
\end{Cor}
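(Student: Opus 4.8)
The plan is to deduce this corollary from Lemma \ref{lemharpyz} by transporting everything along the automorphism $\si$ that exchanges $a$ and $b$. No new idea is needed beyond the symmetry of the situation; the argument is a change of variables, and the work is purely a matter of bookkeeping. Concretely, I would exploit that $\si$ swaps the two orbit directions $a^{-\m N}$ and $b^{-\m N}$ and that it intertwines $h_0$ with $h_1$.

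First I would record the three elementary facts about $\si$ that make the transport work. Since $\si$ is an automorphism of $G$ with $\si(a)=b$ and $\si(b)=a$, it is an involution (one checks $\si\circ\si=\mathrm{id}$ directly from $\si(x,y,z)=(y,x,xy-z)$), and it satisfies $\si_*\mu_0=\mu_0$ because it merely swaps $a^{-1}$ and $b^{-1}$. From this I would check that $f\mapsto f\circ\si$ preserves positive $\mu_0$-harmonic functions: using $\si(a^{-1}g)=b^{-1}\si(g)$ and $\si(b^{-1}g)=a^{-1}\si(g)$ one obtains $P_{\mu_0}(f\circ\si)=(P_{\mu_0}f)\circ\si$, so harmonicity is preserved. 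Being a linear involution of the convex cone of positive $\mu_0$-harmonic functions, $f\mapsto f\circ\si$ also preserves extremality.

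Next I would set $\tilde h:=h\circ\si$, which is therefore an extremal positive $\mu_0$-harmonic function. The key identity is $\si\bigl(a^{-n}\si(g_0)\bigr)=b^{-n}g_0$, coming from $\si(a^{-n})=b^{-n}$ and $\si\circ\si=\mathrm{id}$. It gives $\tilde h(a^{-n}\si(g_0))=h(b^{-n}g_0)$, so the hypothesis $\limsup_n h(b^{-n}g_0)>0$ becomes $\limsup_n \tilde h(a^{-n}\si(g_0))>0$. Lemma \ref{lemharpyz}, applied to $\tilde h$ with base point $\si(g_0)$, then yields $\tilde h=\la\, h_0\circ\rho_{\si(g_0)}$ for some $\la>0$, and in addition tells us that $h_0\circ\rho_{\si(g_0)}$ is extremal.

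Finally I would compose back with $\si$. Since $\si$ is an automorphism, $\si\circ\rho_{g_0}=\rho_{\si(g_0)}\circ\si$, whence $(h_0\circ\rho_{\si(g_0)})\circ\si=h_0\circ\si\circ\rho_{g_0}=h_1\circ\rho_{g_0}$, using the definition $h_1=h_0\circ\si$. As $\si\circ\si=\mathrm{id}$, applying $\cdot\circ\si$ to $\tilde h=\la\,h_0\circ\rho_{\si(g_0)}$ gives $h=\la\,h_1\circ\rho_{g_0}$ with $\la>0$, as claimed; and the extremality of $h_1\circ\rho_{g_0}$ follows from that of $h_0\circ\rho_{\si(g_0)}$ together with the fact that $\cdot\circ\si$ preserves extremality. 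The only step deserving genuine care is the commutation bookkeeping between $\si$, the orbits $a^{-n}\,\cdot$ and $b^{-n}\,\cdot$, and the right translation $\rho_{g_0}$, since $\si$ acts nontrivially on the central coordinate through $xy-z$; but each of these identities reduces to the single fact that $\si$ is an order-two automorphism of $G$ swapping $a$ and $b$, so none of them is a real obstacle.
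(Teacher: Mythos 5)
Your proof is correct and is exactly the paper's intended argument: the paper disposes of this corollary with the single phrase ``Exchanging the role of $a$ and $b$,'' which is precisely the transport along the involutive automorphism $\si$ (already introduced in the paper with $h_1=h_0\circ\si$) that you carry out in detail. All your bookkeeping identities ($\si_*\mu_0=\mu_0$, $P_{\mu_0}(f\circ\si)=(P_{\mu_0}f)\circ\si$, $\si\circ\rho_{g_0}=\rho_{\si(g_0)}\circ\si$, preservation of extremality) check out, so nothing is missing.
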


%32
\subsection{Harmonic functions  that decay on cosets}
\label{sechardec}
\bq
We now discuss positive harmonic functions on $G$ that decay to $0$ 
along the orbits $a^{-\m N} g_0$
and $b^{-\m N} g_0$.
\eq

Let $G_n$ be the subset of $G$ consisting of elements of 
``degree'' $n$,
\begin{eqnarray*}
G_n&=&\{g=(x,y,z)\in G\mid 
x\! +\! y\! =\! n\}.
\end{eqnarray*}
%\begin{eqnarray*}
%G_n^+&=&\{g_w\mid w\in B_n\}\\
%&=&\{(x,y,z)\in G\mid x,y,z\geq 0
%\; ,\; x\! +\! y\! =\! n
%\; {\rm and}\; z\leq xy\}.
%\end{eqnarray*}
By definition and by \eqref{eqnpgnwab}, a positive $\mu_0$-harmonic function 
$h$ on $G$ satisfies the equality, for all $n\geq 1$,
\begin{equation}
\label{eqnhg0spg}
h(g_0)=\sum_{w\in B_{n}}\,  h(g_w^{-1}g_0)=\sum_{g\in G_{n}}\, p(g)\, h(g^{-1}g_0)\, .
\end{equation}
For an integer $A>0$, we set 
\begin{eqnarray}
\label{eqngnaxyz}
G_{n,A}&=&\{g=(x,y,z)\in G_n\mid z\leq A\},\\
\nonumber
G^\si_{n,A}&=&\{g=(x,y,z)\in G_n\mid 
xy\!-\! z\leq A\}.
\end{eqnarray} 
The following lemma tells us when the contributions 
of $G_{n,A}$ and $G^\si_{n,A}$ in Formula 
\eqref{eqnhg0spg} is negligeable.

\bl
\label{lemsumgna}
Let $h$ be a positive $\mu_0$-harmonic function on $G$ such that, 
\begin{equation}
\label{eqnlimalimb}
\lim_{n\ra\infty}h(a^{-n}g_0)=0
\;\; {\rm and}\;\; \lim_{n\ra\infty}h(b^{-n}g_0)=0
\;\;\;\mbox{\rm for all $g_0$ in $G$.}
\end{equation} 
Then, for all $A> 0$ and $g_0$ in $G$, one has 
\begin{equation}
\label{eqnlimsumhgg}
\lim_{n\ra\infty}\sum_{g\in G_{n,A}\cup G^\si_{n,A}}\, p(g)\, h(g^{-1}g_0) =0\, .
\end{equation}
\el

\begin{proof}[Proof of Lemma \ref{lemsumgna}]
It is enough to prove \eqref{eqnlimsumhgg} with $g_0=e$.
Moreover, since $G^\si_{n,A}$ is the image of $G_{n,A}$ by the involution $\si$  which exchanges $a$ and $b$, 
it is enough to prove \eqref{eqnlimsumhgg}
with $g\in G_{n,A}$.
Equivalently, it is enough to prove 
\begin{equation}
\label{eqnlimsumhgw}
\lim_{n\ra\infty}\sum_{w\in B_{n,A}}\, h(g_w^{-1}) =0
\;\;\;\;{\rm where}\;\;\;
B_{n,A}:= \{ w\in B_n\mid g_w^{-1}\in G_{n,A}\}.
\end{equation}
%$$B_{n,A}:= \{ w\in B_n\mid g_w\in G_{n,A}\}.$$	
%We will write $G_n^+:= G_n\cap G^+$ and 
%$G_{n,A}^+:= G_{n,A}\cap G^+$. 
%There exists an integer $\ell_0=\ell_0(A)$ such 
Note that, when $n>A$, every 
word $w\in B_{n,A}$ can be written as 
$$
w=b^m sa^k
$$ with $s\in B_{A+1}$ a word of length  $A\! +\! 1$. See Figure \ref{figbetasalpha}.
\begin{figure}[ht]
	\centerline{\includegraphics[width=4cm]{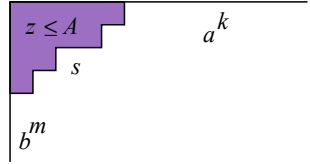}}
	\caption{ {\small The decomposition $w=b^m sa^k$
	for a word $w\in B_{n,A}$.}} 
	\label{figbetasalpha}
\end{figure}
%indeed it is equal to $p(z)$.
%\begin{eqnarray}
%\label{eqnpbesal}
%p(\be^m s\al^k)&=& p(z).
%\end{eqnarray}
One splits the set  $B_{n,A}$ according to $m\geq A$
or $m< A$. 
Therefore, for $n\geq 2A$, one has the inclusion
\begin{eqnarray*}
B_{n,A}\subset 
b^{A}\,B_{n-\!A}\;\cup\; 
B_{2A}\, a^{n-2A}.
\end{eqnarray*}
Therefore, using  \eqref{eqnhg0spg}, one gets the inequalities
\begin{eqnarray*}
\sum_{w\in B_{n,A}}\,  h(g_w^{-1}) 
&\leq & 
\sum_{w\in B_{n-A}}h(g_w^{-1}b^{-A})\; +\; 
\sum_{w\in B_{2A}} h(a^{-(n-2A)}g_w^{-1})\\
&= & 
h(b^{-A})\; +\; 
\sum_{w\in B_{2A}} h(a^{-(n-2A)}g_w^{-1})
\end{eqnarray*}
For all $\eps >0$,
we choose $A$ large enough so that, by the second assumption \eqref{eqnlimalimb}, one has  $h(b^{-A})\leq \eps$.
Then the last sum is a  sum over the fixed finite set $B_{2A}$, 
and, by the first  assumption \eqref{eqnlimalimb}, 
this last sum converges to $0$ when $n$ goes to infinity.
This proves \eqref{eqnlimsumhgw} as required. 
\end{proof}

%33
\subsection{Using the ratio limit theorem}
\label{secusirat}
\bq
Combining Lemma \ref{lemsumgna} with the ratio limit theorem 
we can finish the last case
of the proof of Theorem \ref{thharmu0}.
\eq
\bl
\label{lemhaghbg}
Let $h$ be a positive $\mu_0$-harmonic function on $G$ such that, for all $g_0$ in $G$, 
$\displaystyle\lim_{n\ra\infty}h(a^{-n}g_0)=
\lim_{n\ra\infty}h(b^{-n}g_0)=0$. 
Then $h$ is invariant by the center $Z=c^\m Z$ of $G$.
\el

\begin{proof}[Proof of Lemma \ref{lemhaghbg}] 
Using \eqref{eqnhg0spg} with $g_0$ and $g_0c$, 
we compute,
\begin{eqnarray}
\label{eqnhg0hgg}
h(g_0)-h(g_0c)
&=& \sum_{g\in G_{n}}\, (p(g)- p(gc))\, h(g^{-1}g_0).
\end{eqnarray}

We fix $\eps>0$. According to the ratio limit theorem
(Proposition \ref{proratlim}),
there exists an integer $A>0$ such that, 
for all $g=(x,y,z)$ in $G^+$
with $z\geq A$ and $xy-z\geq A$,
one has 
\begin{eqnarray}
\label{eqnpgpggm}
|p(g)- p(gc)|\leq \eps\, p(g)\, .
\end{eqnarray}
Therefore, using \eqref{eqnhg0hgg}, \eqref{eqnpgpggm} and
Definition \eqref{eqngnaxyz}, one gets
\begin{equation*}
|h(g_0)-h(g_0c)|
\leq \sum_{g\in G_{n}}\, \eps\, p(g)\, h(g^{-1}g_0)
+ \!\!\sum_{g\in G_{n,A}\cup G^\si_{n,A}}
\!\!\!
p(g)(h(g^{-1}g_0) + h(g^{-1} g_0c))
\end{equation*}
By \eqref{eqnhg0spg}, the first term is equal to $\eps h(g_0)$. 
Therefore using twice Lemma \ref{lemsumgna} and letting $n$ go to infinity, one gets
$|h(g_0)-h(g_0c)|\leq \eps\, h(g_0)$.
Since $\eps$ is arbitrary small,
this proves that $h(g_0)=h(g_0c)$ as required.
\end{proof}

\bc
\label{corhaghbg}
Let $h$ be an extremal positive $\mu_0$-harmonic function on $G$ such that, for all $g_0$ in $G$, 
$\displaystyle\lim_{n\ra\infty}h(a^{-n}g_0)=
\lim_{n\ra\infty}h(b^{-n}g_0)=0$. 
Then $h$ is a character of $G$.

In particular, every $\mu_0$-harmonic character of $G$ is an extremal
positive $\mu_0$-harmonic function.
\ec

\begin{proof}[Proof of Corollary \ref{corhaghbg}]
By
Lemma \ref{lemhaghbg}, the function $h$ is $\mu_0$-harmonic on the abelian group $G/Z$. 
By Choquet-Deny Theorem, it is a character.

It remains to check that a $\mu_0$-harmonic character $\chi$ is extremal.
Assume that $\chi=h'+h''$ with both $h'$ and $h''$ 
positive $\mu_0$-harmonic. For all $g_0$ in $G$,
the sequences $h'(a^{-n}g_0)$ and $h'(b^{-n}g_0)$ converge to $0$ for $n\ra\infty$. 
Hence, by the previous discussion and by Choquet Theorem, the function $h'$ is an integral 
$h'=\int_{C} \chi' {\rm d}\si(\chi')$
where $\si$ is a finite positive measure on the 
set $C$ of (harmonic) character $\chi'$ of $G$. Since $h'\leq \chi$, 
the measure $\si$ must be supported by $\chi$.
This proves that $\chi$ is extremal.
\end{proof}

This ends the proof of Theorem \ref{thharmu0}.

%34
\subsection{Extension to finitely supported measures}
\label{secfinsup}
\bq
In this section we give 
the classification of the positive $\mu$-harmonic functions on the Heisenberg group for all finitely supported measure $\mu$.
\eq

Let $G=H_3(\m Z)$ be the Heisenberg group
and $S$ be a finite subset of $G$.
We denote by $G_S$ the subgroup of $G$ generated by $S$.
Let $\mu=\sum_{s\in S} \mu_s \de_s$ be a positive measure on $G$ with support $S$.

We recall that a function $h$ on $G$ is said to be $\mu$-harmonic
%\footnote{In equality \eqref{eqnhhar},
%we implicity think of $h$ as a measure on $G$. 
%It probably had been  more appropriate to say that $h$ is a $\check\mu$-harmonic function
%where  $\check\mu=\sum_{s\in S} \mu_s \de_{s^{-1}}$.}
if
\begin{equation}
\label{eqnhhar}
h=P_\mu h
\;\;{\rm where}\;\;
P_\mu h(g):= \textstyle \sum_{s\in S} \,\mu_s\, h(sg). 
\end{equation}

We want to describe the cone $\mc H^+$ of positive $\mu$-harmonic functions $h$ on $G$.
By Choquet Theorem, it is enough to describe the extremal rays of this cone $\mc H^+$.

There are two constructions of extremal positive $\mu$-harmonic functions.

\subsubsection{The harmonic characters $\chi$}
The $\mu$-harmonic characters are the 
characters 
$\chi: G\ra \m R_{>0}$ of $G$ such that $\sum_{s\in S}\mu_s\,\chi(s)=1$. 
Such a function
$h=\chi$ is an
extremal positive $\mu$-harmonic function on $G$ which is invariant by the center $Z$ of $G$.

We now recall Margulis Theorem which tells us that this first construction is the only possible when $G_\mu^+=G$.

\begin{Fact}
\label{facmar}{\bf (Margulis)}
Let $\mu$ be a finite positive measure on a finitely generated nilpotent group~$G$. 
If the semigroup  $G_\mu^+$ generated by the support of $\mu$ is equal to $G$, 
then every extremal positive $\mu$-harmonic function $h$ on $G$ is a character.
\end{Fact}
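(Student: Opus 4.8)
The plan is to combine a Harnack inequality supplied by the semigroup hypothesis with an induction on the nilpotency class of $G$, the base case being the abelian Choquet--Deny theorem recalled above.

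\textbf{Harnack inequality.} First I would extract the Harnack inequality hidden in the assumption $G_\mu^+=G$. Writing harmonicity as $h(g)=\sum_{s\in S}\mu_s\,h(sg)$ and keeping a single term gives $h(sg)\leq \mu_s^{-1}h(g)$ for each $s\in S$; iterating along a word $\gamma=s_\ell\cdots s_1$ yields $h(\gamma g)\leq C_\gamma\,h(g)$, where $C_\gamma:=\prod_i\mu_{s_i}^{-1}$ is independent of $h$ and of $g$. Since $G_\mu^+=G$, \emph{every} element of $G$ is such a semigroup word, so this bound holds for all $\gamma\in G$; in particular $h$ never vanishes and any two of its values are comparable. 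I would also record that right translation $h\circ\rho_{g_0}\colon g\mapsto h(gg_0)$ preserves positivity and $\mu$-harmonicity, since $\sum_s\mu_s\,h((sg)g_0)=h(gg_0)$.

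\textbf{Covariance under the center.} Next I would show that an extremal $h$ transforms by a character of the center $Z$. Extremality is equivalent to the minorant property: any positive $\mu$-harmonic $h'\leq h$ satisfies $h'=\la h$. For $z\in Z$ the translate $h\circ\rho_z$ is positive and harmonic, and by Harnack $h\circ\rho_z\leq C_z\,h$; hence $C_z^{-1}\,h\circ\rho_z$ is a harmonic minorant of $h$, so $h\circ\rho_z=c(z)\,h$ for a scalar $c(z)$. Composing translations and using $(h\circ\rho_z)\circ\rho_{z^{-1}}=h$ shows $c(z)c(z^{-1})=1$, so $c\colon Z\to\m R_{>0}$ is a genuine character.

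\textbf{Trivializing $c$ on the last central step.} This is where the semigroup hypothesis is essential. Let $k\geq 2$ be the class and $N:=\gamma_k(G)$ the last nonzero term of the lower central series; it is central and contained in $[G,G]$. For $z\in N$ I would compare two estimates of $h(z^n)$: on one hand $h(z^n)=c(z)^n\,h(e)$, and on the other, since $S$ generates $G$ as a \emph{semigroup}, the semigroup word length is comparable to the group word length, and by the distortion of $[G,G]$ in a nilpotent group $z^{\pm n}$ is a semigroup word of length $O(n^{1/k})=o(n)$, so Harnack gives $h(z^{\pm n})\leq K^{o(n)}h(e)$ for a fixed $K$. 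Dividing by $n$ and letting $n\to\infty$ forces $\log c(z)\leq 0$ and $\geq 0$, hence $c(z)=1$. Thus $h$ is $N$-invariant and descends to a positive harmonic function $\bar h$ on $\bar G:=G/N$ for the pushforward of $\mu$.

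\textbf{Induction and the main obstacle.} Finally, $\bar G$ is finitely generated nilpotent of class $k-1$, the semigroup condition passes to the quotient, and extremality descends since any decomposition of $\bar h$ lifts to one of $h$. By the induction hypothesis $\bar h$ is a character of $\bar G$, so $h$ is a character of $G$; the base case $k=1$ is the Choquet--Deny theorem. I expect the main obstacle to be the third step: correctly quantifying the distortion $\ell^+(z^n)=o(n)$ for $z$ in a deep central subgroup and balancing it against the exponential Harnack bound. This is precisely the point at which ``semigroup'' (rather than merely ``group'') is indispensable, as the vanishing harmonic function $h_0$ of the $\mu_0$ example demonstrates.
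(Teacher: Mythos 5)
Your proof is correct, and it takes a genuinely different route from the paper's (which, note, is only sketched for $G=H_3(\m Z)$, whereas your induction addresses the general finitely generated nilpotent group actually asserted in the Fact). The two arguments share their opening move: extremality plus a Harnack-type bound forces covariance under central translations, $h\circ\rho_z=c(z)h$. But they obtain the bound and kill the character $c$ quite differently. The paper first uses $G_\mu^+=G$ to reduce to the case $\mu_c>0$, $\mu_a>0$, so that $h(x,y,z)\geq \mu_c\,h(x,y,z+1)$ gives the covariance directly and $h(x,y,z)=h(x,y,0)t^z$; it then constructs, via the Schauder fixed point theorem applied to the compact convex set $K_t$ of harmonic functions $\psi_0(x,y)t^z$ with the renormalized right-translation action of $a$, an auxiliary harmonic function $h_0(x,y,z)=r^x\ph_0(y)t^z$, and the superharmonicity inequality $\ph_0(y)\geq \mu_a r\,\ph_0(y)t^y$ for all $y\in\m Z$ forces $t=1$ because $t^y$ is unbounded otherwise. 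You instead derive Harnack for arbitrary support from semigroup words, and trivialize $c$ on $\gamma_k(G)$ by a growth comparison: $c(z)^{\pm n}=h(z^{\pm n})/h(e)$ is dominated by $K^{\ell^+(z^{\pm n})}$, and polynomial distortion of the lower central series gives $\ell^+(z^{\pm n})=O(n^{1/k})=o(n)$, so $c\equiv 1$; then you quotient by $\gamma_k(G)$ and induct on the class, with Choquet--Deny as the base (extremality does descend, as you note, since decompositions lift). What each approach buys: the paper's fixed-point argument needs no word-metric input but is tied to explicit Heisenberg coordinates; yours proves the general statement and isolates exactly where the semigroup hypothesis enters (bi-Lipschitz comparability of semigroup and group word lengths), at the cost of invoking two standard but nontrivial facts, namely that comparability and the distortion bound $\ell(z^n)=O(n^{1/k})$ for $z\in\gamma_k(G)$. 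Two cosmetic points: you say ``distortion of $[G,G]$'' where what you use is distortion of $\gamma_k(G)$ (harmless, since only $o(n)$ is needed), and in the induction you should state explicitly that the pushforward of $\mu$ to $G/\gamma_k(G)$ still has support generating the quotient as a semigroup, which is immediate since semigroup generation is preserved by surjections.
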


\begin {proof}[Sketch of proof of Fact \ref{facmar} for $G=H_3(\m Z)$]
Because of the assumption $G_\mu^+=G$, we can assume that $\mu_c>0$ and $\mu_a>0$.
The first part of the argument is as in the abelian case~: 
since $\; h(x,y,z)\geq \mu_c\, h(x,y,z+1)$,
these two \mbox{$\mu$-harmonic} functions are proportional and  we get that, for some $t>0$, one has $h(x,y,z)=h(x,y,0)t^z$.
We now want to prove that  $t=1$.
 
Let $K_t$ be the set of positive harmonic functions $h_0(x,y,z)=\psi_0(x,y)t^z$ with $h_0(e)=1$. 
Since $G_\mu^+=G$, the convex set $K_t$ is  compact 
for the pointwise convergence. The element $a\in G$ acts continuously by ``right-translation and renormalization'' on $K_t$.
By Schauder fixed point theorem, this action  has a fixed point $h_0$ in $K_t$. It
can be written as $h_0(x,y,z)=r^x\ph_0(y)t^z$ with $r>0$.
But then one writes $h_0(g)\geq \mu_ah_0(ag)$ for all $g$ in $G$,  or equivalently $\ph_0(y)\geq \mu_ar\ph_0(y)t^{y}$ for all $y\in \m Z$. This proves that $t=1$.
\end{proof}

When $G_\mu^+\neq G$, a second construction is possible.

\subsubsection{The functions $h_{S_{_0},\chi_{_0}}$
induced from a harmonic character}
Let $S_0\subset S$ be an abelian subset. Denote by 
$\mu_{S_{_0}}:=\sum_{s\in S_{_0}}\mu_s\, \de_s$ the measure restriction of $\mu$ to $S_0$.
Let $\chi_0$ be a $\mu_{S_{_0}}$-harmonic character of $G_{S_{_0}}$.
We extend $\chi_0$ as a function 
$$
\psi_0:=\chi_0\, {\bf 1}_{G_{S_{_0}}}
$$ 
on $G$ 
which is $0$ outside $G_{S_{_0}}$. This function $\psi_0$ is 
$\mu$-subharmonic, so that the sequence $P_\mu^n\psi_0$
is increasing. We set 
$$
h_{S_{_0},\chi_{_0}}=\lim_{n\ra\infty}P_\mu^n\psi_0.
$$
We can tell exactly for which pairs $(S_0,\chi_0)$
the function $h_{S_{_0},\chi_{_0}}$ is finite. 
In~this case the function $h_{S_{_0},\chi_{_0}}$
is an
extremal positive $\mu$-harmonic function on $G$.

We can now state the extension of Theorem \ref{thharmu0} to a more general finitely 
supported measure $\mu$ on $G$. 
%We denote by $G_\mu$ the subgroup of $G$ generated by the support of $\mu$.

\bt
\label{thharmu1}
Let $G=H_3(\m Z)$ and $\mu$ be a positive measure on $G$ 
whose finite support $S$  generates the group $G$.
Then every extremal positive $\mu$-harmonic function $h$
on $G$ is proportional either to a character $\chi$ of $G$ or 
to a translate $h_{S_{_0},\chi_{_0}}\circ \rho_{g_{_0}}$ of a function 
induced from a harmonic character. 
\et
%\newpage

\bc
\label{corharmu1}
Let $G=H_3(\m Z)$, $Z$ its center and $\mu$  a probability measure on $G$ whose finite support $S$ 
generates the group $G$. The following are equivalent:\\
$(i)$ Every positive $\mu$-harmonic function on $G$ is $Z$-invariant.\\
$(ii)$ $G_\mu^+$ contains two non-central elements whose product is in 
$Z\smallsetminus \{0\}$.
\ec

Theorem \ref{thharmu1} and Corollary \ref{corharmu1}
will be proven in the sequel paper \cite{BenoistHeisenberg2}.

We will also see that on the
nilpotent group of rank 4 with cyclic center, there exist extremal positive harmonic
functions which are neither an harmonic character nor a function induced from a harmonic character.

%9
{\small
\bibliography{heisenberg}
}

{\small
\noindent
Y. \textsc{Benoist}: CNRS, 
Universit\'e Paris-Sud,\newline
e-mail: \texttt{yves.benoist@u-psud.fr}}
\end{document}